\newtheorem{theorem}{Theorem}[section] 
\newtheorem{lemma}[theorem]{Lemma}
\newtheorem{question}[theorem]{Question}
\theoremstyle{definition}
\newtheorem{definition}[theorem]{Definition}
\theoremstyle{remark}
\numberwithin{equation}{section}
\newcommand{\R}{{\mathbb R}}
\DeclareMathOperator*{\supp}{supp}
\DeclareMathOperator*{\singsupp}{sing\, supp}
\title{Surfaces in which every point sounds the same}
\author{Feng Wang}
\address{School of Mathematical Sciences, Zhejiang University, Hangzhou 310027, PR China}
\email{wfmath@zju.edu.cn}
\author{Emmett L. Wyman}
\address{Department of Mathematics, University of Rochester, Rochester NY}
\email{emmett.wyman@rochester.edu}
\author{Yakun Xi}
\address{School of Mathematical Sciences, Zhejiang University, Hangzhou 310027, PR China}
\email{yakunxi@zju.edu.cn}
\begin{document}

\begin{abstract}
We address a maximally structured case of the question, ``Can you hear your location on a manifold," posed in \cite{echolocation} for dimension $2$. In short, we show that if a compact surface without boundary sounds the same at every point, then the surface has a transitive action by the isometry group. In the process, we show that you can hear your location on Klein bottles and that you can hear the lengths and multiplicities of looping geodesics on compact hyperbolic quotients.
\end{abstract}

\maketitle

\section{Introduction}

\cite{echolocation} poses the following question: If you find yourself standing at some unknown position in a familiar Riemannian manifold, is it possible to deduce one's location, up to symmetry, by clapping your hands once and listening to the reverberations? 

To phrase this question precisely, we require a bit of setup. We let $(M,g)$ be a compact Riemannian manifold. We consider the natural generalization of the Laplacian to $(M,g)$, the Laplace-Beltrami operator, written locally by
\[
    \Delta_g = |g|^{-1/2} \sum_{i,j} \partial_i( g^{ij} |g|^{1/2} \partial_j ).
\]
By the spectral theorem, $L^2(M)$ admits an orthonormal basis of Laplace-Beltrami eigenfunctions, $e_1, e_2, \ldots$ satisfying
\[
    \Delta_g e_j = -\lambda_j^2 e_j.
\]
If the manifold has a boundary, we require some appropriate boundary conditions (e.g. Dirichlet or Neumann) to make the Laplace-Beltrami operator self-adjoint. The \emph{pointwise Weyl counting function} is defined as
\[
    N_x(\lambda) = \sum_{\lambda_j \leq \lambda} |e_j(x)|^2
\]
and is independent of the choice of orthonormal basis. The question above is then phrased as follows.

\begin{question}[\cite{echolocation}] \label{question: echolocation} Take $(M,g)$ as above, and suppose $x, y \in M$. If $N_x(\lambda) = N_y(\lambda)$ for all $\lambda$, must there exist an isometry $M \to M$ mapping $x$ to $y$?
\end{question}

From here on out, we will say that \emph{echolocation holds} for a manifold if the question above is answered in the affirmative for that manifold.

To see how the pointwise counting function $N_x$ is related to the physical interpretation of the problem as described above, we consider the solution operator $\cos(t\sqrt{-\Delta_g})$ taking initial data $f$ to the unique solution to the initial-value initial value problem
\[
    (\Delta_g - \partial_t^2) u = 0 \qquad \text{ and } \qquad \begin{cases}
        u(0) = f \\
        \partial_t u(0) = 0.
    \end{cases}
\]
We recall (or quickly verify) the Schwartz kernel of the solution operator is given in terms of the eigenbasis by
\[
    \cos(t\sqrt{-\Delta_g})(x,y) = \sum_j \cos(t\lambda_j) e_j(x) \overline{e_j(y)}.
\]
Taking a formal cosine transform of the measure $dN_x$ yields
\begin{align*}
    \int_{-\infty}^\infty \cos(t\lambda) \, dN_x(\lambda) = \sum_{j} \cos(t \lambda_j) |e_j(x)|^2 = \cos(t\sqrt{-\Delta_g})(x,x).
\end{align*}
We interpret the rightmost side as the distributional solution $u$ of the wave equation above with initial data $f = \delta_x$, and evaluated at $(t,x)$. Here, the initial $\delta_x$ represents our sharp ``snap" whose reverberations we are listening to at point $x$ for all $t \geq 0$. Furthermore, since $N_x$ is real and supported on $[0,\infty)$, $dN_x$ is uniquely identified by its cosine transform on $[0,\infty)$. Hence, there is no loss of information when passing between interpretations of the problem.

In \cite{echolocation}, we prove that echolocation holds for boundaryless manifolds equipped with generic metrics. Generically, the group of isometries on $M$ is trivial, and so this can be thought of as the minimally structured case. This paper starts to address the maximally structured case. Namely, if $N_x$ is the same function over all $x$ in $M$, must the isometry group act transitively on $M$. In this paper, we answer this question for two-dimensional surfaces without boundary.

\begin{theorem}[Main Theorem] \label{thm: main}
    Let $(M,g)$ be a compact Riemannian surface 
    with empty boundary.  $N_x(\lambda)$ is constant in $x$ for each $\lambda$, if and only if the isometry group acts transitively on $M$.
\end{theorem}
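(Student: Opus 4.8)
The reverse direction is immediate: if the isometry group acts transitively, then for any $x,y$ there is an isometry $\phi$ with $\phi(x)=y$, and since $\phi$ intertwines $\Delta_g$ with itself, it carries an eigenbasis to an eigenbasis, whence $N_x = N_{y}$. The content is in the forward direction, and the plan is to extract from the hypothesis ``$N_x$ constant in $x$'' enough geometric rigidity to force a transitive isometry group. The first step is to use the short-time asymptotics of $N_x(\lambda)$ — equivalently, of the wave trace on the diagonal $\cos(t\sqrt{-\Delta_g})(x,x)$ near $t=0$ — whose coefficients are universal polynomials in the curvature and its covariant derivatives at $x$. Constancy of $N_x$ forces all these local invariants to be constant, so in particular the Gauss curvature $K$ is constant. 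This splits the problem into the three constant-curvature cases: $K>0$, $K=0$, and $K<0$, after rescaling.

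For $K>0$ the surface is a quotient of the round sphere $S^2$, so it is either $S^2$ or $\mathbb{RP}^2$, both of which are homogeneous; there is nothing to prove. For $K=0$ the surface is a flat torus or a flat Klein bottle (the orientable and non-orientable quotients of $\R^2$ by a crystallographic group acting freely). Flat tori are homogeneous, so the real work here is the Klein bottle: one must show that if $N_x$ is constant on a flat Klein bottle then that Klein bottle is one of the rectangular (``right-angled'') ones whose isometry group acts transitively — equivalently, rule out the oblique flat structures. This is where one invokes the intermediate result advertised in the abstract, that echolocation holds on Klein bottles; concretely, one computes $N_x$ (via the Poisson-summation/lattice-sum form of the wave trace on the diagonal, tracking the contributions of the orientation-reversing deck transformations, which depend on the distance from $x$ to the corresponding closed geodesics) and shows the non-constant terms survive unless the lattice is rectangular.

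The genuinely hard case is $K<0$: here $M = \Gamma\backslash \mathbb{H}^2$ for a cocompact Fuchsian group $\Gamma$, and one must show constancy of $N_x$ forces $\Gamma\backslash\mathbb{H}^2$ to be homogeneous — but no closed hyperbolic surface is homogeneous, so the real claim is that $N_x$ is \emph{never} constant on a compact hyperbolic surface, i.e.\ this case is vacuous. The strategy is to read off, from the singularities of the cosine transform of $dN_x$ in $t>0$, the \emph{looping geodesics} at $x$ — geodesics that leave $x$ and return to $x$ (not necessarily smoothly) — together with their lengths and multiplicities, exactly the ``hearing lengths and multiplicities of looping geodesics'' statement from the abstract. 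Each point $x$ has such a set $\mathcal L_x$, determined by $N_x$; if $N_x$ is constant then $\mathcal L_x$ is the same set of lengths-with-multiplicities for every $x$. One then argues this is impossible on a compact hyperbolic surface: the looping-geodesic length spectrum varies with $x$ because, for instance, the shortest looping geodesic length at $x$ is $2\,\mathrm{inj}(x)$-type data that genuinely varies (or, more robustly, one counts looping geodesics of length $\le L$ through $x$ and shows the count cannot be independent of $x$, using the negative curvature to control the dependence on the base point). The main obstacle is precisely this last rigidity step: converting the wave-front/propagation-of-singularities description of the looping spectrum into a clean, point-dependent invariant, and handling the non-transversal returns and the book-keeping of multiplicities (conjugate points, the difference between ``looping'' and ``closed'' geodesics) carefully enough to derive a contradiction uniformly over all compact hyperbolic surfaces.
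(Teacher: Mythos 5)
Your overall decomposition matches the paper: heat-trace asymptotics force constant curvature, split into $K>0$, $K=0$, $K<0$, dispose of the sphere/$\mathbb{RP}^2$ and flat torus cases as homogeneous, and then show the remaining candidates cannot have constant $N_x$. However, your treatment of the flat Klein bottle case contains a genuine error. You frame the goal as ``rule out the oblique flat structures'' and keep the rectangular ones as homogeneous. Neither half of this is right. First, there are no oblique flat Klein bottles: the relation $gtg^{-1}=t^{-1}$ for a glide reflection $g$ and translation $t$ forces the translation vector to be perpendicular to the glide axis, so every flat Klein bottle is ``rectangular.'' Second, no flat Klein bottle is homogeneous: the orientation-reversing gluing singles out two special horizontal circles (the ones fixed setwise by the glide), and the isometry group's orbits are unions of horizontal circles, not all of $M$. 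The correct move, which the paper carries out by writing the eigenbasis explicitly, is to show $N_x$ is \emph{never} constant on \emph{any} flat Klein bottle --- concretely, the eigenspace for $\lambda = 2\pi/b$ contributes a non-constant $\cos^2(2\pi x_2/b)$ term to $N_x$. Your proposal, as stated, would leave a nonempty set of flat Klein bottles in which you claim the theorem holds because the isometry group is transitive; but for those the theorem would actually be \emph{false} under your claim, since $N_x$ constant would then be consistent with a non-transitive isometry group.

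For the hyperbolic case your outline is headed in the right direction --- lift to $\mathbb{H}^2$, use the Hadamard/B\'erard parametrix to make the looping lengths and multiplicities audible, then derive a contradiction from constancy --- but the final rigidity step you sketch (``count looping geodesics of length $\le L$ and show the count varies'') is vaguer and harder to close than what's needed. The paper's contradiction is cleaner and you should aim for it: if $r_x = \inf\mathcal L_x$ is constant, then the shortest loop at $x$ has length equal to the length of the unique closed geodesic in its free homotopy class (negative curvature), so that loop \emph{is} closed; hence every point of $M$ lies on some closed geodesic, but a compact hyperbolic surface has only countably many closed geodesics, which cannot cover $M$. Your ``counting'' alternative would require nontrivial uniformity estimates and does not obviously terminate; the homotopy-minimizer observation short-circuits all of that.
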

As a byproduct, we also prove the following.
\begin{theorem}[Klein bottle] \label{thm: Klein}
    Let $(M,g)$ be a flat Klein bottle. Then echolocation holds on $M$ despite the fact that $N_x(\lambda)$ is not constant in $x$. 
\end{theorem}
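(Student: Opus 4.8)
The plan is to translate $N_x$ into a purely metric quantity—the multiset of lengths of geodesic loops based at $x$—and then to read off the location of $x$ (up to isometry) directly from that multiset. Realize the flat Klein bottle as $K = \R^2/\Gamma$ with $\Gamma = \langle A, B\rangle$, where $A\colon (u,v)\mapsto (u+a,v)$ and $B\colon (u,v)\mapsto (-u, v+b)$. From $dN_x$ we recover the diagonal heat trace
\[
    \int_0^\infty e^{-t\lambda^2}\,dN_x(\lambda) = \sum_j e^{-t\lambda_j^2}|e_j(x)|^2 = H_K(t,x,x),
\]
and by the method of images $H_K(t,x,x) = (4\pi t)^{-1}\sum_{\gamma\in\Gamma} e^{-|\tilde x-\gamma\tilde x|^2/4t}$, where $\tilde x\in\R^2$ is any lift of $x$. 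Letting $t\to 0^+$ and successively peeling off the dominant exponential shows that $N_x$ determines, and is determined by, the multiset $\mathcal L_x := \{\,|\tilde x - \gamma\tilde x| : \gamma\in\Gamma\,\}$ counted with multiplicity. Thus it suffices to prove: (i) $\mathcal L_x$ is non-constant in $x$; and (ii) $\mathcal L_p = \mathcal L_q$ forces $p$ and $q$ into a common isometry orbit.

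Next I would compute $\mathcal L_x$ explicitly. Splitting $\Gamma$ into the translations $A^m B^{2n}\colon (u,v)\mapsto (u+ma, v+2nb)$ and the glide reflections $A^m B^{2n+1}\colon (u,v)\mapsto (ma-u, v+(2n+1)b)$, the translations contribute the point-independent multiset $\mathcal T = \{\sqrt{m^2a^2 + 4n^2 b^2} : m,n\in\Z\}$, while the glide reflections contribute $\mathcal G_u = \{\sqrt{(ma-2u)^2 + (2n+1)^2 b^2} : m,n\in\Z\}$, which depends on $x = (u,v)$ only through the first coordinate $u$. Hence $\mathcal L_x = \mathcal T \sqcup \mathcal G_u$ depends only on $u$, and since the least element of $\mathcal G_u$ varies with $u$ (see below), this already gives (i).

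For (ii) I would first identify $\operatorname{Isom}(K)$: it is generated by the circle of vertical translations $(u,v)\mapsto (u,v+s)$, the reflection $(u,v)\mapsto (-u,v)$, and the order-two isometry $(u,v)\mapsto (u+\tfrac a2, v)$ (for exceptional $(a,b)$ there may be additional isometries, which only helps). Consequently the isometry orbits of $K$ are precisely the fibers of the coordinate $u$ modulo the infinite dihedral action $u\mapsto -u$, $u\mapsto u+\tfrac a2$, a fundamental domain being $u\in[0,\tfrac a4]$. It therefore remains to recover $u\in[0,\tfrac a4]$ from $\mathcal L_x$. For such $u$ we have $2u\in[0,\tfrac a2]$, so $\min_{m\in\Z}|ma - 2u| = 2u$ is attained at $m=0$, and the smallest element of $\mathcal G_u$ equals $\sqrt{4u^2 + b^2}$, a strictly increasing function of $u$ on $[0,\tfrac a4]$. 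The constants $a, b$ being fixed for the surface in question—indeed $\mathcal T$ already recovers the rectangular lattice $a\Z\times 2b\Z$, and then $\mathcal G_0$ separates $a$ from $b$—this determines $u$, hence the isometry orbit of $x$. Together with the reduction above this proves echolocation on $K$, while (i) shows $N_x$ is not constant.

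The routine parts are the heat-trace extraction and the lattice arithmetic. The main obstacle is the bookkeeping around symmetry: one must pin down $\operatorname{Isom}(K)$ for every flat Klein bottle—including the exceptional $(a,b)$ admitting extra isometries—and verify that the dihedral identification of the $u$-coordinate coincides exactly with the orbit equivalence, as well as handle the tie and boundary cases (e.g.\ $u = \tfrac a4$, or arithmetic resonances between the $a$- and $b$-data) so that $u\mapsto \mathcal L_x$ descends to a genuine injection on the orbit space.
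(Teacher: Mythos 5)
Your proof is correct, but it takes a genuinely different route from the paper's. The paper works with the explicit orthonormal eigenbasis of the flat Klein bottle (following the computation in the reference it cites), reads off that the cluster sum $\sum_{\lambda_{m,n}=2\pi/b}|e_{\lambda_{m,n}}|^2$ equals $\tfrac{4}{ab}\cos^2(2\pi x_2/b)$ up to a constant (with a small case split depending on whether the eigenvalue $2\pi/b$ happens to have higher multiplicity), notes this cluster sum is audible and non-constant, and then identifies the isometry orbits via the ``four strips'' picture. You instead recover from $N_x$, via the heat kernel and the method of images, the multiset of lengths $|\tilde x-\gamma\tilde x|$, split $\Gamma$ into translations and glide reflections, and recover the essential coordinate from the smallest glide-reflection length $\sqrt{4u^2+b^2}$. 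The two proofs localize the audible ``coordinate'' in dual ways---the paper in frequency space (a specific eigenvalue cluster), you in physical space (the shortest non-translation loop)---and then identify the same dihedral orbit structure on that coordinate. Your version is arguably closer in spirit to the ``echolocation'' framing and to the wave-trace methods the paper later uses for hyperbolic surfaces, while the paper's version is more self-contained given that it already needs the eigenbasis.

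A few remarks on the bookkeeping you flagged as remaining. You do not actually need to pin down the \emph{full} isometry group, including the exceptional $(a,b)$: echolocation only requires producing \emph{some} isometry carrying $p$ to $q$, so it suffices that the three families you list descend to $K$ (they do: vertical translations commute with $\Gamma$, and conjugation by $(u,v)\mapsto(-u,v)$ and $(u,v)\mapsto(u+\tfrac a2,v)$ sends $A,B$ to $A^{\pm1}$ and $B$ or $AB$, hence preserves $\Gamma$). The tie at $u=a/4$ changes only the multiplicity of the minimum of $\mathcal G_u$, not its value $\sqrt{a^2/4+b^2}$, so $u\mapsto\min\mathcal G_u$ is still strictly increasing and injective on $[0,a/4]$. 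And since the surface $(M,g)$ is given, $a$, $b$, and hence $\mathcal T$ are known in advance, so extracting the multiset $\mathcal G_u=\mathcal L_x\setminus\mathcal T$ (multiplicities subtracted) is unambiguous; there is no circularity in referencing $\mathcal G_0$. With these points in hand the outline is complete.
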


\subsection*{Acknowledgements}Xi was  supported by the National Key Research and Development
Program of China No. 2022YFA1007200
 and NSF China Grant No. 12171424. Wang was supported by NSF China Grant No. 12031017 and Zhejiang Provincial Natural Science Foundation of China under Grant No. LR23A010001. Wyman was supported by NSF grant DMS-2204397. The authors would like to thank Wenshuai Jiang for helpful conversations.

\section{The strategy of proof for Theorem \ref{thm: main}}

 It is clear that if the there is an isometry on $M$ mapping $x$ to $y$ then $N_x$ must equal $N_y$, and thus the necessity direction of Theorem \ref{thm: main} is trivial. We shall prove the remaining direction. We will need to extract two types of information from $N_x$: \emph{local} information in the form of curvature, and \emph{global} information regarding the behavior of geodesics. We begin with the former. By classical pointwise asymptotics for the heat kernel \cite{heatsecondterm}, we have for small $t > 0$,
\begin{multline*}
    \int_{-\infty}^\infty e^{-t\lambda^2} \, dN_x(\lambda) = \sum_j e^{-t\lambda_j^2} |e_j(x)|^2 = e^{t\Delta_g}(x,x) \\
    = \frac{1}{4\pi t} \left( 1 + \frac{t}{3} K(x) + O(t^2) \right),
\end{multline*}
where, since $\dim M = 2$, $K(x)$ is the sectional curvature at $x$. It follows from the hypotheses that $M$ has constant curvature. After rescaling the metric, there are only three cases:
\begin{enumerate}
    \item $K = 1$, where $M$ is the sphere or the projective sphere with the standard metric.
    \item $K = 0$, where $M$ is a flat torus or flat Klein bottle.
    \item $K = -1$, where $M$ is a compact quotient of the hyperbolic plane $\mathbb H$.
\end{enumerate}
Note the conclusion of Theorem \ref{thm: main} holds in case (1) since spheres and real projective spaces are symmetric spaces. It also holds if $M$ is orientable in case (2). We aim to exclude Klein bottles from case (2) and all of case (3).  We will do the former in Section \ref{sec: klein bottle} by direct calculation and the latter in Section \ref{sec: hyperbolic surface} by leveraging the global information we can extract from $N_x$, namely information about geodesic loops at $x$.

\begin{definition}\label{def: loops}
A \emph{geodesic loop} at $x$ is a geodesic segment in $M$ with both endpoints at $x$. Note, a geodesic loop need not close smoothly. The \emph{looping times} at $x$ is the set
\[
    \mathcal L_x := \{ |\gamma| : \gamma \text{ is a looping geodesic at $x$} \}
\]
of lengths of looping geodesics.
\end{definition}

The relationship between looping geodesics and the behavior of the pointwise counting function is very well studied \cite{Saf, SZDuke, STZ, SZRev, CanGal4}. In particular, for fixed $x$,
\[
    \singsupp_t \cos(t\sqrt{-\Delta_g})(x,x) \subset \mathcal L_x \cup -\mathcal L_x.
\]
One can think of the looping times as the times at which you hear an echo after your initial clap. For the sake of Question \ref{question: echolocation}, it would be convenient to show that the inclusion above is actually an equality, but this does not seem to hold in general. 
In the special case where $(M,g)$ has constant curvature $-1$, however, the looping times $\mathcal L_x$ and even the multiplicities of the looping geodesics are audible, as we will show in Lemma \ref{lem: loops are audible}. This will be used to derive the required contradiction to exclude case (3).

\section{Echolocation on a Klein bottle} \label{sec: klein bottle}
In this section, we explicitly compute the pointwest Weyl counting function $N_x(\lambda)$ on a Klein bottle, and prove Theorem \ref{thm: Klein}.

We shall follow the definition of a Klein bottle $\mathbb K_{a,b}$ in \cite{Klein}. A point in $\mathbb K_{a,b}$ is identified with a point $x=(x_1,x_2)$ in the rectangle $[0,a/2]\times[0,b]$, with its horizontal sides identified with the same orientation and the vertical sides identified with the opposite orientations. It is shown in \cite{Klein} that a complete family of real eigenfunctions of $\mathbb K_{a,b}$ is given by the following functions.
\begin{equation}
    \begin{cases}
    \cos\Big(\dfrac{2\pi nx_2}{b}\Big), &\text{for }m=0,\ n\in\mathbb N,\\
    \cos\Big(\dfrac{2\pi mx_1}{a}\Big)\cos\Big(\dfrac{2\pi nx_2}{b}\Big),\ \sin\Big(\dfrac{2\pi mx_1}{a}\Big)\cos\Big(\dfrac{2\pi nx_2}{b}\Big), &\text{for even }m\in\mathbb Z^+,\ n\in\mathbb N,\\
      \cos\Big(\dfrac{2\pi mx_1}{a}\Big)\sin\Big(\dfrac{2\pi nx_2}{b}\Big),\ \sin\Big(\dfrac{2\pi mx_1}{a}\Big)\sin\Big(\dfrac{2\pi nx_2}{b}\Big), &\text{for odd }m\in\mathbb Z^+,\ n\in\mathbb \mathbb Z^+.
    \end{cases}
\end{equation}
After normalization, we obtain a orthonormal basis of eigenfunctions, $e_{\lambda_{m,n}}$, with eigenvalues $\lambda_{m,n}:=2\pi\sqrt{m^2/a^2+n^2/b^2}$. We record the following table for the norm squared of each eigenfunction.
\begin{table}
\begin{tabular}{|c|c|c|}
\hline
\textbf{$(m,n)$} & \textbf{$|e_{\lambda_{m,n}}|^2(x)$} & \textbf{$\lambda_{m,n}$} \\
\hline
$(2k,0),k\in\mathbb N$ & $\frac{2}{ab}$ & $2\pi m/a$ \\
$(2k,n),(k,n)\in\mathbb N\times\mathbb Z^+$ & $\frac{4}{ab}\cos^2\big(\frac{2\pi nx_2}{b}\big)$ & $2\pi\sqrt{m^2/a^2+n^2/b^2}$ \\
$(2k+1,n),(k,n)\in\mathbb N\times\mathbb Z^+$ & $\frac{4}{ab}\sin^2\big(\frac{2\pi nx_2}{b}\big)$ & $2\pi\sqrt{m^2/a^2+n^2/b^2}$ \\
\hline
\end{tabular}
\caption{$|e_{\lambda_{m,n}}|^2(x)$ for each $(m,n)\in\mathbb N^2$.}
\end{table}

first suppose $1/b< 2/a$. Then we have \[\sum_{\lambda_{m,n}=2\pi/b}|e_{\lambda_{m,n}}|^2(x)=\dfrac{4}{ab}\cos^2\Big(\dfrac{2\pi x_2}{b}\Big),\]
which is not constant in $x_2$, and thus $N_x$ is not constant in $x$ in this case. For the remaining case when $1/b\ge 2/a$, 

It is easy to see that $N_x(\lambda)$ cannot be constant in $x$. In fact, since $0< 1/b<\sqrt{1/a^2+1/b^2}.$ The multiplicity of $\lambda_{m,n}=2\pi/b$ will be one, unless $1/b=2\ell/a$ for some $\ell\in\mathbb Z^+$.  Therefore we must have 
\[
\begin{cases}
\sum\limits_{\lambda_{m,n}=2\pi/b}|e_{\lambda_{m,n}}|^2(x)=\dfrac{4}{ab}\cos^2\Big(\dfrac{2\pi x_2}{b}\Big)+\dfrac{2}{ab}, &\text{if $1/b=2\ell/a$ for some $\ell\in\mathbb Z^+$},\\
\sum\limits_{\lambda_{m,n}=2\pi/b}|e_{\lambda_{m,n}}|^2(x)=\dfrac{4}{ab}\cos^2\Big(\dfrac{2\pi x_2}{b}\Big), &\text{otherwise}.
\end{cases}
\]
In either case, $N_x$ is not constant in $x$. Furthermore, we observe that any point $x\in\mathbb K_{a,b}$ can be mapped, via a self-isometry of $\mathbb K_{a,b}$, to a point $\tilde x\in \{0\}\times[0,b/4]$. To see this, we first notice that any point can be associated with a point on $\tilde x\in \{0\}\times[0,b]$ via horizontal translations. Now we claim that if we divide $\mathbb K_{a,b}$ evenly into four  horizontal strips, say $A, B, C,$ and $D$, then these strips can be mapped to one another via suitable isometries. 
 \begin{figure}
    \centering
    \subfloat{{\includegraphics[width=5cm]{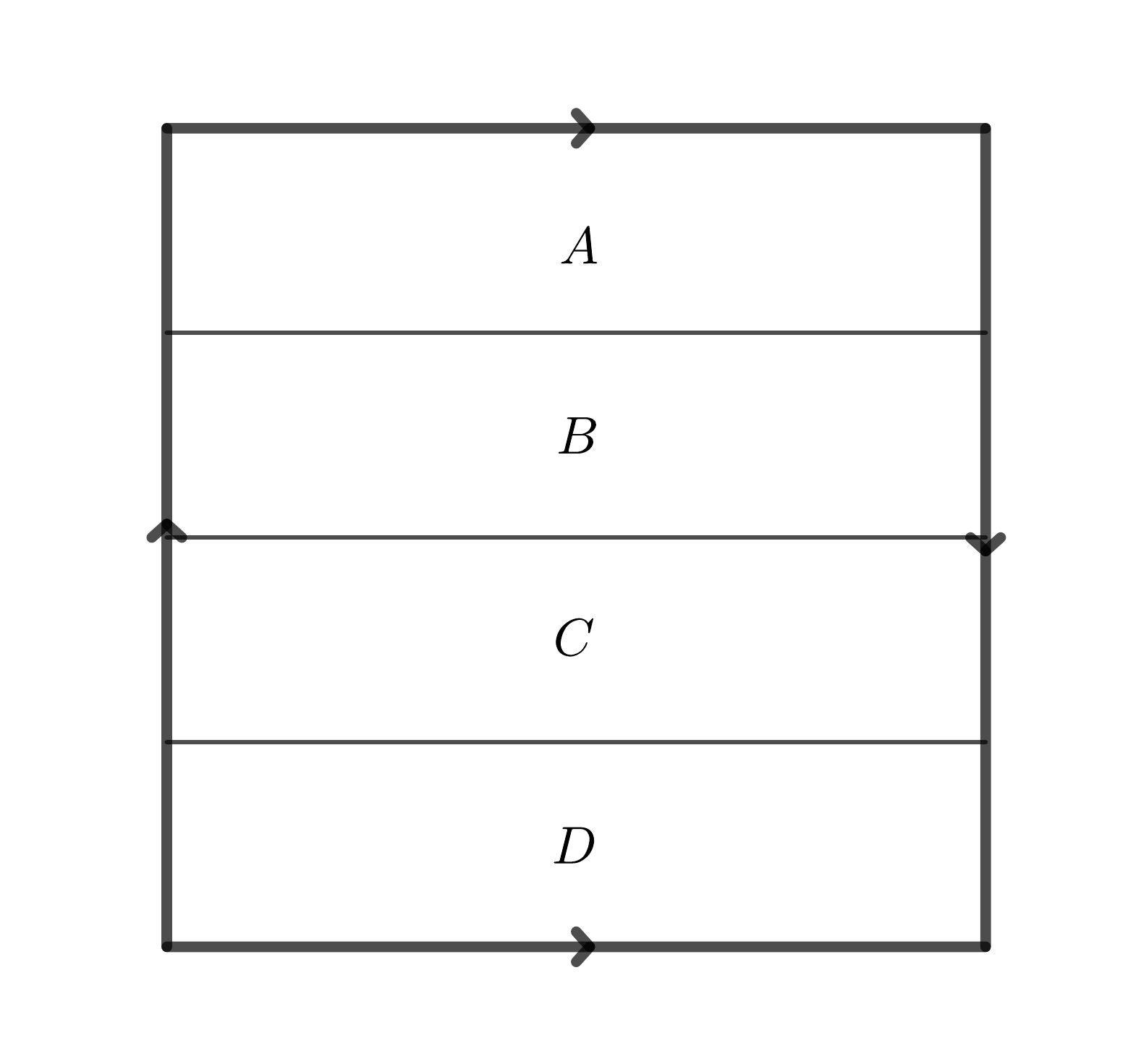} }}
    \qquad
    \subfloat{{\includegraphics[width=5cm]{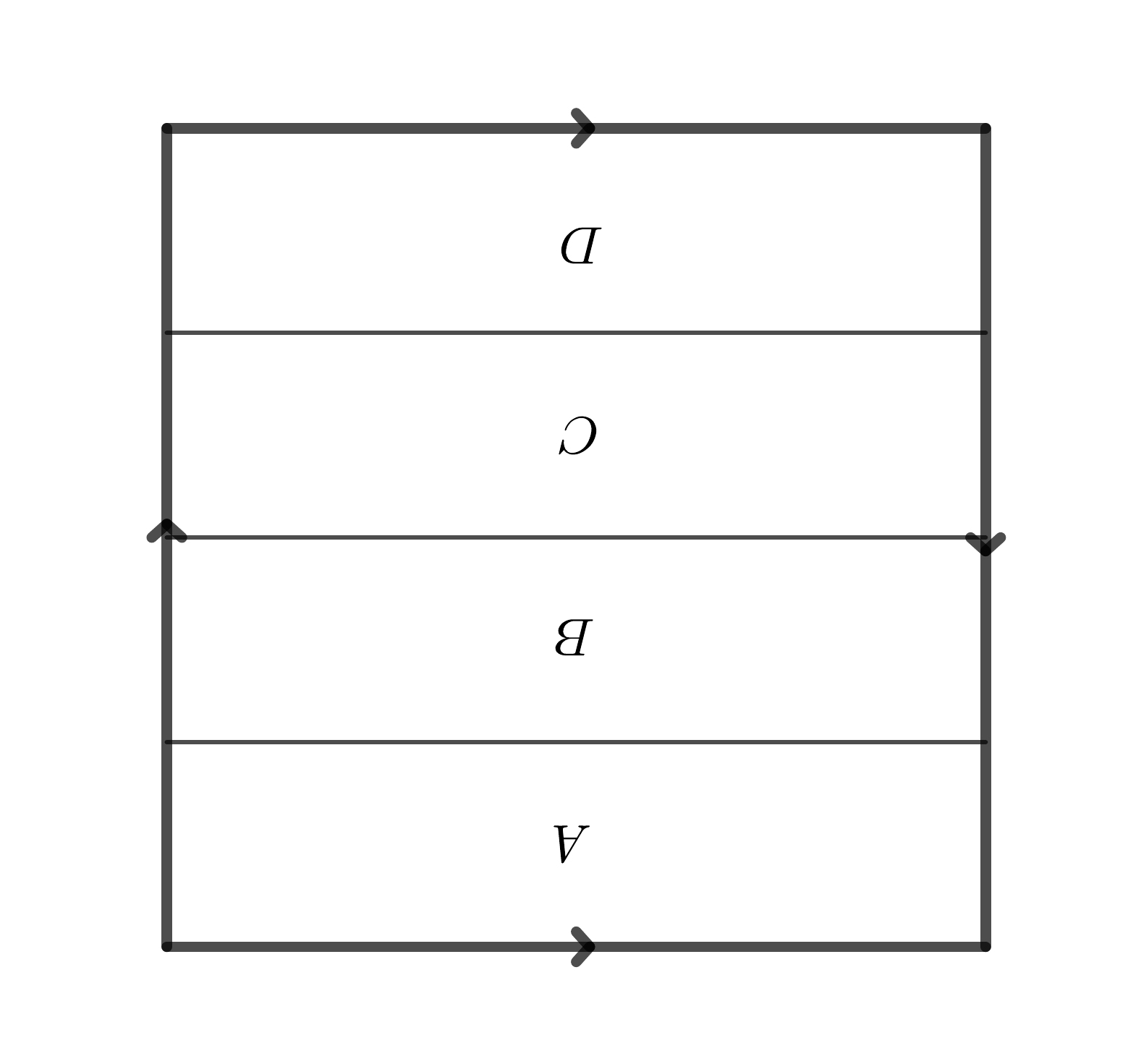} }}
    \subfloat{{\includegraphics[width=5cm]{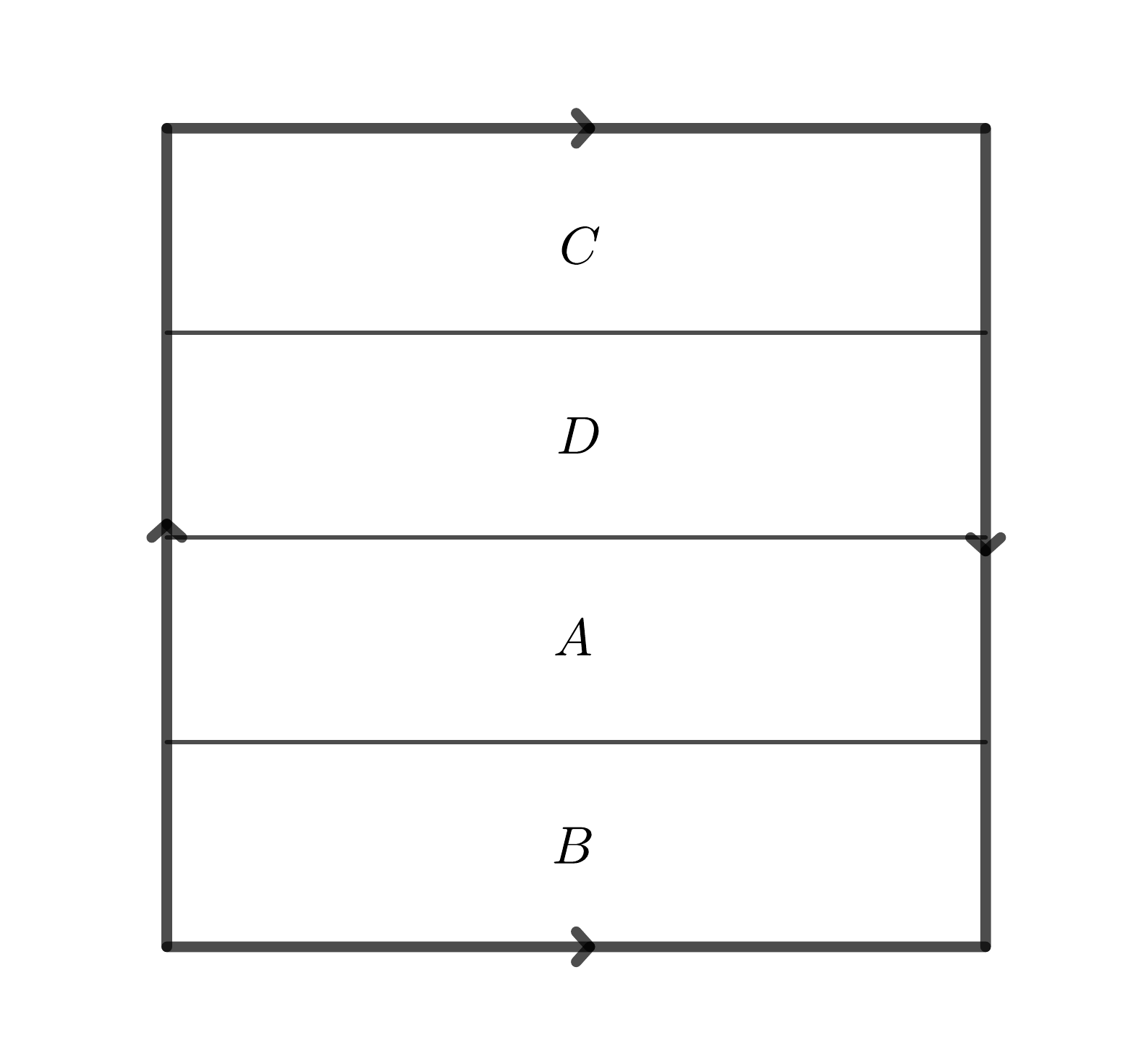} }}
    \caption{Four identical pieces on a Klein bottle.}
    \label{fig: Klein bottle}
\end{figure}
Indeed, since our representation of $\mathbb K_{a,b}$ is centrosymmetric, we see that $A$ can be mapped to $D$ and $B$ can be mapped to $C$ if we turn $\mathbb K_{a,b}$ 180 degree about its center. Next, we notice that if we cut  $\mathbb K_{a,b}$ along the middle horizontal line $y=b/2$, 
 and then patch $A$ and $D$ together  again to recover  $\mathbb K_{a,b}$. See Figure \ref{fig: Klein bottle}. This shows that $A$ can be mapped to $C$ and $B$ can be mapped to $D$, and our claim is proved.  Since $\cos^2\Big(\dfrac{2\pi x_2}{b}\Big)$ is always audible, we conclude that echolocation holds on $\mathbb K_{a,b}$.

\section{Excluding compact hyperbolic surfaces} \label{sec: hyperbolic surface}

Consider the audible distribution
\[
    \cos(t \sqrt{-\Delta_g})(x,x) = \int_{-\infty}^\infty \cos(t\lambda) \, dN_x(\lambda)
\]
in $t$ as described in the introduction. In \cite{Berard}, B\'erard lifts the cosine wave kernel to the universal cover $(\R^2, \tilde g)$ by the identity
\[
    \cos(t \sqrt{-\Delta_g})(x,x) = \sum_{\gamma \in \Gamma} \cos(t \sqrt{-\Delta_{\tilde g}})(\tilde x, \gamma(\tilde x)),
\]
where here $\Gamma$ is the deck group and $\tilde x$ is a lift of $x$. The fact that there are no conjugate pairs on $(\R^2, \tilde g)$ allows us B\'erard to use Hadamard's parametrix globally. We will do the same.

Before writing down the parametrix, we recall some standard notions. We say a smooth function $a$ on $\R^n \times \R^N$ is a \emph{symbol} of order $m$ if it satisfies, for each compact $K \subset \R^n$ and multiindices $\alpha$ and $\beta$,
\[
    \sup_{x \in K} |\partial_\theta^\alpha \partial_x^\beta a(x,\theta)| \leq C_{K,\alpha,\beta} (1 + |\theta|)^{m - |\alpha|}.
\]
We denote the set of symbols of order $m$ on $\R^n \times \R^N$ as $S^m(\R^n \times \R^N)$. For more on symbols and symbol classes, see e.g. \cite{HormanderPaper, HIII, DuistermaatFIOs, SFIO}.

We will summarize what we need of the Hadamard parametrix from \cite{Hang}. We write
\[
    \cos(t\sqrt{-\Delta_{\tilde g}})(\tilde x, \tilde y) = K_N(t, \tilde x, \tilde y) + R_N(t, \tilde x, \tilde y)
\]
where we will characterize $K_N$ as an oscillatory integral and $R_N$ can be made to be quite smooth. Combining Remark 1.2.5, (3.6.10), and (5.2.16) of \cite{Hang}, we have
\[
    K_N(t,\tilde x, \tilde y) = (2\pi)^{-2} \alpha_0(\tilde x, \tilde y) \int_{\R^2} e^{i \varphi(\tilde x, \tilde y, \xi) \pm i tp(\tilde y, \xi)} a_\pm(t, \tilde x, \tilde y, \xi) \, d\xi
\]
where
\[
    a_\pm(t,\tilde x, \tilde y, \xi) - \frac 12 \alpha_0(\tilde x, \tilde y) \in S^{-2}(\R^{1 + 2 + 2} \times \R^2)
\]
for some appropriate phase function $\varphi(\tilde x, \tilde y, \xi)$ and an appropriate smooth function $\alpha_0(\tilde x, \tilde y)$, which we will now describe.

Since $\tilde M$ is nonpositively curved, the exponential map $\exp_{\tilde y} : T_{\tilde y} \tilde M \to \tilde M$ is a diffeomorphism. We use the logarithm $\log_{\tilde y} : \tilde M \to T_{\tilde y} \tilde M$ to denote its inverse. Remark 1.2.5 of \cite{Hang} characterizes the phase function above as
\[
    \varphi(\tilde x, \tilde y, \xi) = \langle \log_{\tilde y}(\tilde x), \xi \rangle.
\]
The leading coefficient $\alpha_0$ is characterized as
\[
    \alpha_0(\tilde x, \tilde y) = |g(\log_{\tilde y}(\tilde x))|^{-1/4},
\]
where the metric $g$ is that of geodesic normal coordinates about $\tilde y$. Finally, the remainder term $R_N$ is $C^{N - 5}$ in $(t,\tilde x, \tilde y)$ by the discussion after (5.2.18). Furthermore, by Huygen's principle, the remainder term $R_N$ can be made to be supported in $d_{\tilde g}(\tilde x, \tilde y) \leq 2|t|$.

We will use the parametrix above to establish the following key asymptotic quantity:

\begin{lemma}\label{lem: pre-trace}
Let $(M,g)$ be a boundary-less Riemannian surface with nonpositive sectional curvature. Let $\chi$ be a Schwartz-class function on $\R$ with $\widehat \chi$ supported in a compact subset of $\mathbb R^+$. 
    \begin{multline*}
        \int_{-\infty}^\infty e^{-it \lambda} \widehat \chi(t) \cos(t\sqrt{-\Delta_g})(x,x) \, dt \\
        = (2\pi)^{-1/2} \lambda^{1/2} \sum_{\gamma \in \Gamma} e^{\pi i / 4} e^{-i \lambda d_{\tilde g}(\tilde x, \gamma(\tilde x))} \frac{\alpha(\tilde x, \gamma(\tilde x))}{d_{\tilde g}(\tilde x, \gamma(\tilde x))^{1/2}} \widehat \chi(d_{\tilde g}(\tilde x, \gamma(\tilde x))) + O(\lambda^{-1/2}).
    \end{multline*}
\end{lemma}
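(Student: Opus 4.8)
The plan is to substitute Bérard's lift of the wave kernel and the Hadamard parametrix into the left-hand side, and then evaluate the resulting oscillatory integral in $\xi$ by stationary phase. First I would write
\[
    \cos(t\sqrt{-\Delta_g})(x,x) = \sum_{\gamma \in \Gamma} \cos(t\sqrt{-\Delta_{\tilde g}})(\tilde x, \gamma(\tilde x)) = \sum_{\gamma \in \Gamma} \big( K_N(t, \tilde x, \gamma(\tilde x)) + R_N(t, \tilde x, \gamma(\tilde x)) \big),
\]
and observe that since $\widehat\chi$ is compactly supported in $\R^+$, only finitely many $\gamma$ contribute: the support condition on $R_N$ (and the analogous finite propagation speed for $K_N$) forces $d_{\tilde g}(\tilde x, \gamma(\tilde x)) \lesssim 1$, so the sum is finite and we may treat one term at a time. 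Choosing $N$ large, the remainder $R_N$ is $C^{N-5}$, so $\int e^{-it\lambda}\widehat\chi(t) R_N(t,\tilde x, \gamma(\tilde x))\,dt = O(\lambda^{-\infty})$ by repeated integration by parts in $t$, and in particular is absorbed into the $O(\lambda^{-1/2})$ error.

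Next I would insert the oscillatory integral formula for $K_N$. For a fixed $\gamma$, writing $\tilde y = \gamma(\tilde x)$, $r = d_{\tilde g}(\tilde x, \tilde y)$, and using $p(\tilde y, \xi) = |\xi|_{g(\tilde y)}$, the contribution is
\[
    (2\pi)^{-2}\alpha_0(\tilde x, \tilde y) \int_{\R} \int_{\R^2} e^{-it\lambda} e^{i\varphi(\tilde x, \tilde y, \xi) \pm i t p(\tilde y, \xi)} \widehat\chi(t)\, a_\pm(t, \tilde x, \tilde y, \xi)\, d\xi\, dt.
\]
Carrying out the $t$-integral turns $\widehat\chi(t)$ against $e^{it(\pm p(\tilde y,\xi) - \lambda)}$ into (essentially) $\chi$-type factors; since $\widehat\chi$ is supported in $\R^+$, only the $+$ branch survives for $\lambda > 0$ large, and we are left with an integral of the form $\int_{\R^2} e^{i\langle \log_{\tilde y}(\tilde x), \xi\rangle} b(\xi)\,d\xi$ where $b$ localizes $|\xi|_{g(\tilde y)}$ near $\lambda$. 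Passing to polar-type coordinates $\xi = \rho\,\omega$ adapted to the metric $g(\tilde y)$ and doing the radial integral against the localizer, the remaining angular integral is a standard oscillatory integral over $S^1$ with phase $\rho\langle \log_{\tilde y}(\tilde x), \omega\rangle$, whose critical points are $\omega = \pm \log_{\tilde y}(\tilde x)/r$. Stationary phase on $S^1$ (a one-dimensional nondegenerate stationary phase, since the curvature of the sphere of radius $\rho$ is nonvanishing) produces the factor $\rho^{-1/2} \asymp \lambda^{-1/2}$, the Maslov factor $e^{\pm i\pi/4}$, and the exponential $e^{-i\lambda r}$ from the minimum critical point (the maximum critical point gives $e^{+i\lambda r}$, which pairs with $\widehat\chi$ evaluated at a negative argument and hence vanishes). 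Collecting the powers of $\lambda$: the $d\xi$ measure contributes $\rho\,d\rho\,d\omega \sim \lambda$, the radial localization integrates to an $O(1)$ constant times $\widehat\chi(r)$, and stationary phase contributes $\lambda^{-1/2}$, for a net $\lambda^{1/2}$, matching the claimed main term with $\alpha(\tilde x, \tilde y) = c\,\alpha_0(\tilde x, \tilde y)$ absorbing the principal symbol value $\tfrac12\alpha_0$ and normalization constants, and with the $r^{-1/2}$ coming from the angular Hessian $\sim \rho r$.

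The main obstacle I anticipate is bookkeeping the constants and the error terms uniformly: one must check that (i) the contribution of $a_\pm - \tfrac12\alpha_0 \in S^{-2}$ genuinely yields a lower-order $O(\lambda^{-1/2})$ term rather than $O(\lambda^{-3/2})$ — it does, because losing two orders in $\xi$ against the gain of $\lambda$ from $d\xi$ and $\lambda^{-1/2}$ from stationary phase nets $\lambda^{-1/2}$; (ii) the stationary phase expansion error is controlled uniformly over the finitely many $\gamma$ and over $t$ in the (compact) support of $\widehat\chi$; and (iii) the non-stationary parts of the $\xi$-integral (away from $|\xi|_{g(\tilde y)} \sim \lambda$ and away from the critical directions) contribute $O(\lambda^{-\infty})$ by integration by parts. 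None of these is conceptually hard given the parametrix from \cite{Hang} and the classical stationary phase lemma, but the constant-chasing to land exactly the coefficient $(2\pi)^{-1/2} e^{\pi i/4}$ requires care. I would organize it by first reducing to a single model integral $\lambda^{1/2}\int_{S^1} e^{-i\lambda r \langle e_1, \omega\rangle}(\cdots)\,d\omega$ and quoting stationary phase once.
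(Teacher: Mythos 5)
Your overall strategy---lift by B\'erard's identity, substitute the Hadamard parametrix, kill $R_N$ and the wrong sign branch, and reduce to a stationary-phase computation localized near $|\xi|\sim\lambda$---is the same as the paper's. The difference is purely organizational: you carry out the $t$-integral exactly (turning $\widehat\chi(t)$ into a $\chi$-type localizer in $|\xi|$ around $\lambda$), then sequence through radial and angular integrals with a one-dimensional stationary phase in the angle, whereas the paper rescales $\xi\mapsto\lambda\xi$, inserts a fixed bump $\beta(|\xi|)$ near $|\xi|=1$, and applies a single three-variable stationary phase in $(t,\theta,r)$, reading off the $\lambda^{1/2}$, the factor $d_{\tilde g}^{-1/2}$, and the Maslov phase $e^{i\pi/4}$ from the $3\times3$ Hessian $\begin{psmallmatrix}0&0&1\\0&\pm d&0\\1&0&0\end{psmallmatrix}$. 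Both routes give the same answer and your version is a legitimate alternative; the paper's version is slightly tidier for tracking the $\widehat\chi(d_{\tilde g}(\tilde x,\gamma\tilde x))$ factor, which in your organization emerges from pairing the surviving phase $e^{i\rho r}$ against the localizer in the radial integral---worth making explicit.

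One small slip in item (i) of your error bookkeeping: the $S^{-2}$ part of the amplitude contributes $O(\lambda^{-3/2})$, not $O(\lambda^{-1/2})$. With the amplitude giving $\lambda^{-2}$ at $|\xi|\sim\lambda$, and the same $\lambda^{2}\cdot\lambda^{-3/2}=\lambda^{1/2}$ gain from the measure and stationary phase as for the principal part, the net is $\lambda^{1/2}\cdot\lambda^{-2}=\lambda^{-3/2}$. The error is in your favor, so the conclusion is unaffected; the stated $O(\lambda^{-1/2})$ in the lemma actually comes from the next-order term in the stationary phase expansion applied to the principal symbol $\tfrac12\alpha_0$, not from the lower-order amplitude.
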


\begin{proof}
    By lifting to the universal cover and using Hadamard's parametrix above, we write
    \begin{align*}
        &\int_{-\infty}^\infty e^{-it \lambda} \widehat \chi(t) \cos(t\sqrt{-\Delta_g})(x,x) \, dt \\
        &= \sum_{\gamma \in \Gamma} \int_{-\infty}^\infty e^{-it\lambda} \widehat \chi(t) \cos(t\sqrt{-\Delta_{\tilde g}})(\tilde x, \gamma(\tilde x)) \, dt \\
        &= I + II
    \end{align*}
    where we have main term
    \[
        I = (2\pi)^{-2} \sum_{\gamma \in \Gamma} \sum_\pm \int_{-\infty}^\infty \int_{\R^2} e^{i\langle \log_{\tilde y}(\tilde x), \xi \rangle \pm it|\xi| - it\lambda} \widehat \chi(t) a_\pm(t, \tilde x, \gamma(\tilde x), \xi) \, d\xi \, dt
    \]
    and remainder term
    \[
        II = \sum_{\gamma \in \Gamma} \sum_\pm \int_{-\infty}^\infty e^{-it\lambda} \widehat \chi(t) R_N(t, \tilde x, \gamma(\tilde x)) \, dt.
    \]
    Note, term $II$ can be made to decay in $\lambda$ of arbitrary polynomial order by taking $N$ large enough and integrating by parts in $t$. Hence, it contributes nothing to the main term in the lemma, and we turn our attention to $I$. We first note that if the `$\pm$' sign in the exponent is negative, then integration by parts in $t$ yields a rapidly-decaying term which we also neglect. Up to negligible terms, we have
    \[
        I = (2\pi)^{-2} \sum_{\gamma \in \Gamma} \int_{-\infty}^\infty \int_{\R^2} e^{i\langle \log_{\tilde x}(\gamma(\tilde x)), \xi \rangle + it(|\xi| - \lambda)} \widehat \chi(t) a_\pm(t, \tilde x, \gamma(\tilde x), \xi) \, d\xi \, dt.
    \]
    We perform a change of variables $\xi \mapsto \lambda \xi$ and write this term as
    \[
        = (2\pi)^{-2} \lambda^2 \sum_{\gamma \in \Gamma} \int_{-\infty}^\infty \int_{\R^2} e^{i\lambda ( \langle \log_{\tilde x}(\gamma(\tilde x)), \xi \rangle + t(|\xi| - 1))} \widehat \chi(t) a_\pm(t, \tilde x, \gamma(\tilde x), \lambda \xi) \, d\xi \, dt.
    \]
    Let $\beta$ be a smooth bump function with compact support in $(1/2,2)$ taking the value $1$ on a neighborhood of $1$. We cut the integral into $\beta(|\xi|)$ and $1 - \beta(|\xi|)$ parts, the latter of which 
    decays rapidly by integrating by parts in $t$. We are left with
    \[
        (2\pi)^{-2} \lambda^2 \sum_{\gamma \in \Gamma} \int_{-\infty}^\infty \int_{\R^2} e^{i\lambda ( \langle \log_{\tilde x}(\gamma(\tilde x)), \xi \rangle + t(|\xi| - 1))} \widehat \chi(t) \beta(|\xi|) a_\pm(t, \tilde x, \gamma(\tilde x), \lambda \xi) \, d\xi \, dt.
    \]

    Next, we write $\xi = r (\cos \theta, \sin \theta)$ in polar form with $r > 0$ and rephrase the integral as
    \begin{multline*}
        (2\pi)^{-2} \lambda^2 \sum_{\gamma \in \Gamma} \int_{-\infty}^\infty \int_{-\pi}^\pi \int_0^\infty e^{i\lambda ( r (v_1 \cos \theta + v_2 \sin \theta) + t(r - 1))} \\
        \widehat \chi(t) \beta(r) a_\pm(t, \tilde x, \gamma(\tilde x), \lambda r(\cos \theta, \sin \theta)) r \, dr \, d\theta \, dt.
    \end{multline*}
  Note, the phase function can be written
    \[
        \varphi = r (v_1 \cos \theta + v_2 \sin \theta) + t(r - 1)
    \]
    where we take as shorthand 
    $v = \log_{\tilde x}(\gamma(\tilde x))$.    We now employ the method of stationary phase in variables $t$ and $r$. After a rotation, assume without loss of generality at this point that $v = d_{\tilde g}(\tilde x, \gamma(\tilde x)) e_1$. Then, we have
    \[
        \nabla_{t,\theta, r} \varphi = \begin{bmatrix}
            r - 1 \\
            -r d_{\tilde g}(\tilde x, \gamma(\tilde x)) \sin \theta \\
            d_{\tilde g}(\tilde x, \gamma(\tilde x)) \cos \theta + t
        \end{bmatrix}
    \]
    from which we obtain a critical point at $(t, \cos \theta, r) = (\mp d_{\tilde g}(\tilde x, \gamma(\tilde x)), \pm 1, 1)$. Note, for $t \in \supp \widehat \chi$, we require that $\pm = -$ and $\mp = +$. At this sole critical point, the Hessian of the phase reads
    \[
        \nabla_{t,\theta, r}^2 \varphi = 
        \begin{bmatrix}
            0 & 0 & 1 \\
            0 & - d_{\tilde g} & 0 \\
            1 & 0 & 0
        \end{bmatrix},
    \]
    which has determinant and signature
    \[
        |\det \nabla^2_{t, \theta, r} \varphi| = d_{\tilde g}(\tilde x, \gamma(\tilde x)) \quad \text{ and } \quad \operatorname{sig} \nabla^2_{t, \theta, r} \varphi = 1. 
    \]
    Hence, by the method of stationary phase, we have
    \[
        I = (2\pi)^{-1/2} \lambda^{1/2} \sum_{\gamma \in \Gamma} e^{\pi i / 4} e^{-i \lambda d_{\tilde g}(\tilde x, \gamma(\tilde x))} \frac{\alpha(\tilde x, \gamma(\tilde x))}{d_{\tilde g}(\tilde x, \gamma(\tilde x))^{1/2}} \widehat \chi(d_{\tilde g}(\tilde x, \gamma(\tilde x))) + O(\lambda^{-1/2}).
    \]
    The lemma follows.
\end{proof}

As a corollary, we can conclude that the looping times, with multiplicity, are audible for hyperbolic surfaces. To state this precisely, fix $x$ and a lift $\tilde x$ to the hyperbolic plane $\mathbb H$ via the covering map. Then, every looping geodesic at $x$ lifts to the unique geodesic in the universal cover $\mathbb H$ with endpoints at $\tilde x$ and $\gamma(\tilde x)$ where $\gamma$ is an element of the deck group. We conclude that the looping times at $x$ are given by
\[
    \mathcal L_x := \{ d_{\tilde g}(\tilde x, \gamma(\tilde x)) : \gamma \in \Gamma \setminus I\}.
\]
Given any $r$ in this set, we have a multiplicity
\[
    m_x(r) = \#\{ \gamma \in \Gamma \setminus I : d_{\tilde g}(\tilde x, \gamma(\tilde x)) = r\}.
\]
We now have:

\begin{lemma}\label{lem: loops are audible}
    If $(M,g)$ is a compact hyperbolic surface, $m_x$ and $\mathcal L_x$ are audible.
\end{lemma}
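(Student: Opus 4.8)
The plan is to read off $m_x$ and $\mathcal L_x$ directly from the asymptotic expansion of Lemma~\ref{lem: pre-trace}, whose left-hand side is an audible quantity. Since $\cos(t\sqrt{-\Delta_g})(x,x)=\int\cos(t\lambda)\,dN_x(\lambda)$ is determined by $N_x$, for any Schwartz $\chi$ whose Fourier transform $\widehat\chi$ is supported in a compact subset of $\mathbb{R}^+$ the function
\[
    F_\chi(\lambda):=\int_{-\infty}^\infty e^{-it\lambda}\,\widehat\chi(t)\,\cos(t\sqrt{-\Delta_g})(x,x)\,dt
\]
is audible. The sum over $\gamma\in\Gamma$ on the right-hand side of Lemma~\ref{lem: pre-trace} is in fact \emph{finite}: the term $\gamma=I$ drops out because $0\notin\supp\widehat\chi$, and only finitely many $\gamma$ satisfy $d_{\tilde g}(\tilde x,\gamma(\tilde x))\in\supp\widehat\chi$ since $\Gamma$ acts properly discontinuously on $\mathbb H$ and $\supp\widehat\chi$ is bounded. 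Thus Lemma~\ref{lem: pre-trace} presents the audible function $F_\chi(\lambda)$ as a finite trigonometric polynomial in $\lambda$ with frequencies $-d_{\tilde g}(\tilde x,\gamma(\tilde x))$, up to an $O(\lambda^{-1/2})$ error.

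The second ingredient is that on a hyperbolic surface the amplitude factor $\alpha$ depends only on the distance and never vanishes. In geodesic normal coordinates centered at $\tilde y\in\mathbb H$ the metric has volume density $|g(\log_{\tilde y}(\tilde x))|^{1/2}=\sinh(r)/r$ with $r=d_{\tilde g}(\tilde x,\tilde y)$, so $\alpha_0(\tilde x,\tilde y)=(r/\sinh r)^{1/4}$ is a strictly positive function of $r$ alone; tracking this through the stationary phase computation shows $\alpha(\tilde x,\gamma(\tilde x))$ is a fixed nonzero constant multiple of $(r/\sinh r)^{1/4}$, which we denote $\alpha(r)$. In particular all $\gamma\in\Gamma$ sharing a common value $d_{\tilde g}(\tilde x,\gamma(\tilde x))=r$ contribute identical terms to Lemma~\ref{lem: pre-trace}, so their combined contribution is $m_x(r)$ times a single such term.

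Now fix $r_0>0$ and choose $\chi$ with $\widehat\chi$ smooth, compactly supported in $\mathbb{R}^+$, and identically $1$ near $r_0$. Set
\[
    \Phi(\lambda):=(2\pi)^{1/2}\,e^{-\pi i/4}\,\lambda^{-1/2}\,e^{i\lambda r_0}\,F_\chi(\lambda),
\]
an audible function of $\lambda$. Writing $d_\gamma:=d_{\tilde g}(\tilde x,\gamma(\tilde x))$, Lemma~\ref{lem: pre-trace} together with the preceding paragraph gives
\[
    \Phi(\lambda)=\sum_{\gamma\,:\,d_\gamma\in\supp\widehat\chi}\frac{\alpha(d_\gamma)}{d_\gamma^{1/2}}\,\widehat\chi(d_\gamma)\,e^{i\lambda(r_0-d_\gamma)}+O(\lambda^{-1}).
\]
Taking the Ces\`aro mean annihilates the error term and every summand with $d_\gamma\ne r_0$, leaving
\[
    \lim_{T\to\infty}\frac1T\int_0^T\Phi(\lambda)\,d\lambda=\frac{\alpha(r_0)}{r_0^{1/2}}\,m_x(r_0).
\]
Since $\alpha(r_0)/r_0^{1/2}$ is a known nonzero constant and the left-hand side is audible, $m_x(r_0)$ is audible for every $r_0>0$; hence $\mathcal L_x=\{\,r_0>0:m_x(r_0)\ge1\,\}$ is audible as well, which is the lemma.

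The only substantive input is Lemma~\ref{lem: pre-trace} itself, which is already established, so the remaining work is essentially bookkeeping. The one point requiring care is the interference among the terms of Lemma~\ref{lem: pre-trace} with distinct values of $d_{\tilde g}(\tilde x,\gamma(\tilde x))$: rather than attempting to isolate a single looping time by shrinking $\supp\widehat\chi$ — which would force us to appeal to the discreteness of $\mathcal L_x$ — the Ces\`aro average above extracts the $d_\gamma=r_0$ contribution cleanly and uniformly. One should also check that the implicit constant in the $O(\lambda^{-1})$ remainder is locally uniform in $\lambda$ down to $\lambda=0$ so that the time average is legitimate, which follows from the smoothness of $F_\chi$.
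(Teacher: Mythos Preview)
Your argument is correct and takes a genuinely different route at the extraction step. The paper also starts from Lemma~\ref{lem: pre-trace} and the observation that in constant curvature $-1$ the amplitude $\alpha$ is a known nonvanishing function of the distance alone, but it then isolates a single looping time by setting $\widehat\chi_{r,\epsilon}(t)=\sqrt{\sinh t}\,\rho(\epsilon^{-1}(t-r))$ and letting $\epsilon\to0$; this relies on the discreteness of $\mathcal L_x$, so that for $\epsilon$ small enough only the $d_\gamma=r$ terms survive and the $\lambda\to\infty$ limit actually exists. Your Ces\`aro mean avoids that appeal entirely: with a fixed $\widehat\chi$ you time-average the finite trigonometric polynomial, which kills every $d_\gamma\neq r_0$ contribution and the $O(\lambda^{-1})$ tail (via $\tfrac1T\log T\to0$) simultaneously. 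Both arguments are short; yours is slightly more robust in that it would go through even if one only knew $\mathcal L_x$ to be countable, while the paper's is more hands-on and conveniently absorbs the $\sinh$ factor into $\widehat\chi$. One small slip worth noting: the paper computes $\alpha(\tilde x,\tilde y)=(\sinh r/r)^{-1/2}$, i.e.\ essentially $\alpha_0^2$ rather than $\alpha_0$, because $\alpha_0$ enters twice (once in front of the parametrix and once as the principal part of $a_\pm$); this does not affect your proof, which only uses that $\alpha(r)$ is a known nonzero function of $r$.
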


\begin{proof} It suffices to show $m_x$ is audible as an integer-valued function on $(0,\infty)$. We can extract this information from the result of Lemma \ref{lem: pre-trace}, but first we specify some of the constants.

Recall from the discussion at the start of the section that $\alpha(\tilde x, \tilde y) = |g(\log_{\tilde x}(\tilde y))|^{-1/4}$ in geodesic normal coordinates. In the case of constant curvature $-1$, we have 
\[
    \alpha(\tilde x, \tilde y) = \Big(\frac{\sinh r}{r}\Big)^{-1/2} 
\]
where $r = d_{\tilde g}(\tilde x, \tilde y)$. Hence,
\begin{multline*}
    \int_{-\infty}^\infty e^{-it \lambda} \widehat \chi(t) \cos(t\sqrt{-\Delta_g})(x,x) \, dt \\
    = (2\pi)^{-1/2} \lambda^{1/2} \sum_{\gamma \in \Gamma} e^{\pi i / 4} e^{-i \lambda d_{\tilde g}(\tilde x, \gamma(\tilde x))} \frac{1}{\sqrt{\sinh d_{\tilde g}(\tilde x, \gamma(\tilde x))}} \widehat \chi(d_{\tilde g}(\tilde x, \gamma(\tilde x))) + O(\lambda^{-1/2}).
\end{multline*}
Now take $\rho$ to be supported on $[-1,1]$ with $\rho(0) = 1$. Then, for $r, \epsilon > 0$ fixed, set
\[
    \widehat \chi_{r,\epsilon}(t) = \sqrt{\sinh t} \  \rho(\epsilon^{-1}(t - r)).
\]
$m_x(r)$ is given by the output of the expression
\[
    \lim_{\epsilon \to 0} \lim_{\lambda \to \infty} (2\pi)^{1/2} e^{-\pi i / 4 + i\lambda r} \lambda^{-1/2} \int_{-\infty}^\infty e^{-it\lambda} \widehat \chi_{r,\epsilon}(t) \cos(t\sqrt{-\Delta_g})(x,x) \, dt.
\]
Note, no matter what $r$ is, the inner limit converges for all sufficiently small $\epsilon > 0$. The lemma is proved.
\end{proof}

Now we assume that a compact hyperbolic manifold has constant pointwise counting function and derive a contradiction. For each $x \in M$, let $r_x$ denote the length of the shortest looping geodesic, i.e. $r_x = \inf \mathcal L_x$. This quantity is audible, and hence is constant. We claim that every point $x$ lies in a closed geodesic in $M$, and we will use this claim to derive a contradiction. We recall the following standard result:

\begin{theorem}[\cite{Peter}, Chap 6] Each homotopy class of loops in a negatively curved manifold has a unique length-minimizing curve, and that curve is a closed geodesic.
\end{theorem}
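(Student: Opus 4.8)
The plan is to treat this as a standard theorem of nonpositive-curvature comparison geometry, proved by lifting to the universal cover, where a nontrivial free homotopy class of loops becomes a conjugacy class of deck transformations and the length functional becomes a displacement function. (We read the statement as being about nontrivial \emph{free} homotopy classes on a compact manifold; a based class is minimized by a geodesic loop, which need not close smoothly, so the conclusion ``closed geodesic'' forces the free version.) Let $\pi\colon \tilde M\to M$ be the Riemannian universal cover, a Hadamard manifold since $K<0$, and let $\Gamma\cong\pi_1(M)$ be the deck group, acting by isometries freely, properly discontinuously, and cocompactly. A nontrivial free homotopy class corresponds to a conjugacy class $[\gamma]$ with $e\neq\gamma\in\Gamma$. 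For $g\in\Gamma$ I would work with the displacement function $d_g(\tilde x):=d_{\tilde g}(\tilde x, g\tilde x)$, which is convex along geodesics of $\tilde M$ because $\tilde M$ is nonpositively curved, and with the translation length $\ell(g):=\inf_{\tilde x} d_g(\tilde x)$, which is a conjugacy invariant. Any loop freely homotopic to the given class lifts to a path from some $\tilde x$ to $g\tilde x$ with $g\in[\gamma]$, so its length is at least $d_g(\tilde x)\geq\ell(\gamma)$; hence $\ell(\gamma)$ is the infimal length in the class, and the whole task is to realize it \emph{uniquely} by a closed geodesic.

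First I would show the infimum is attained, i.e.\ that every $\gamma\neq e$ is an axial isometry. Take $\tilde x_k$ with $d_\gamma(\tilde x_k)\to\ell(\gamma)$; by cocompactness choose $\delta_k\in\Gamma$ with $y_k:=\delta_k\tilde x_k$ in a fixed compact fundamental domain, so that $d_{\tilde g}(y_k,\gamma_k y_k)=d_\gamma(\tilde x_k)$ for $\gamma_k:=\delta_k\gamma\delta_k^{-1}$. Since $y_k$ and $\gamma_k y_k$ both stay in compact sets, proper discontinuity forces $\{\gamma_k\}$ to be finite; along a subsequence $y_k\to y$ and $\gamma_k\equiv\gamma_\infty$, giving $d_{\tilde g}(y,\gamma_\infty y)=\ell(\gamma_\infty)=\ell(\gamma)$, so $y\in\mathrm{Min}(\gamma_\infty)\neq\varnothing$ (and $\mathrm{Min}(\gamma)\neq\varnothing$ too, by conjugacy). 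Note $\ell(\gamma)>0$ because $\Gamma$ is torsion-free and acts freely, so $\gamma$ fixes no point. Convexity of $d_{\gamma_\infty}$ makes $\mathrm{Min}(\gamma_\infty)$ closed and convex, and a first-variation argument shows $\gamma_\infty$ translates along a geodesic line $A$ through any point of $\mathrm{Min}(\gamma_\infty)$ by exactly $\ell(\gamma)$; thus $A$ is an axis and $\pi(A)$ is a closed geodesic of length $\ell(\gamma)$ in the given free homotopy class. That any loop of length $\ell(\gamma)$ in the class is a smoothly closed geodesic follows by reversing this: its lift joins $\tilde x$ to $g\tilde x$ ($g\in[\gamma]$) with total length $d_g(\tilde x)=\ell(g)$, hence is a minimizing geodesic segment with $\tilde x\in\mathrm{Min}(g)$, and concatenating $\Gamma$-translates of the loop shows the full lift is the bi-infinite axis of $g$; so the minimizer is precisely $\pi(\text{axis of }g)$, smooth including at the basepoint.

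For uniqueness I would argue by contradiction, and this is the step that genuinely needs $K<0$ rather than merely $K\leq 0$. Suppose $\sigma_0,\sigma_1$ are two length-minimizing loops in the class. Each lifts to the axis of a member of $[\gamma]$; after conjugating I may take both to be $\gamma$-invariant geodesic lines $A_0,A_1\subset\mathrm{Min}(\gamma)$. If $A_0\neq A_1$, pick $p_i\in A_i$ and restrict the convex function $d_\gamma$ to the geodesic segment $[p_0,p_1]$: it equals $\ell(\gamma)=\min d_\gamma$ at both endpoints, hence is constant $\equiv\ell(\gamma)$ along $[p_0,p_1]$, so every point of the segment lies in $\mathrm{Min}(\gamma)$, i.e.\ on a $\gamma$-axis. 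In particular $A_0$ and $A_1$ are parallel geodesic lines — their separation, measured along $\gamma$-compatible parametrizations, is $\langle\gamma\rangle$-periodic, hence bounded — so by the flat strip theorem they bound a totally geodesic flat strip isometric to $\R\times[0,a]$ in $\tilde M$, contradicting $K<0$. Therefore $A_0=A_1$ and $\sigma_0=\sigma_1$ as unparametrized closed geodesics. I expect this flat-strip step to be the main thing needing care: one must know that a geodesic segment along which a convex displacement function is constant forces a flat region, which is exactly the sandwich (flat strip) lemma of nonpositive-curvature geometry; it is also precisely this step that breaks when the curvature bound is not strict — flat tori, or metrics with a flat de Rham factor — where a single free homotopy class can carry an entire family of minimizing closed geodesics.
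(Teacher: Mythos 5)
The paper does not prove this statement; it is cited directly from \cite{Peter}, so there is no in-paper argument to compare against. Your proof is correct and is the standard one found in such references: lift to the Hadamard universal cover, identify a nontrivial free homotopy class with a conjugacy class $[\gamma]$ of deck transformations, realize the infimal length in the class as the translation length $\ell(\gamma)=\inf_{\tilde x} d_{\tilde g}(\tilde x,\gamma\tilde x)$, use cocompactness and proper discontinuity to attain the infimum and extract an axis, and invoke the flat-strip theorem to show that two distinct minimizing axes would force a flat totally geodesic strip in the universal cover, contradicting $K<0$; your observation that the statement must be read in terms of \emph{free} homotopy classes (a based-homotopy minimizer is a geodesic loop that need not close smoothly) is the right reading and is consistent with how the paper immediately applies the theorem.
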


Suppose we have a geodesic loop through $x$ with length $r_x$. This loop is homotopic to a unique closed geodesic which also has length $r_x$. Since this is the length of our original loop at $x$, our geodesic loop must have been closed.

Absurdities abound already, but here is a straightforward one. There are only countably many closed geodesics on $M$---one for each element of the homotopy classes of loops---but somehow every point in $M$ lies on one.

\bibliography{references}{} 
\bibliographystyle{alpha}

\end{document}